\DeclareMathOperator*{\argmin}{arg\,min}
\DeclareMathOperator\supp{supp}
\newtheorem{theorem}{Theorem}[section]
\newtheorem{lemma}[theorem]{Lemma}
\newtheorem{definition}[theorem]{Definition}
\newtheorem{example}[theorem]{Example}
\newtheorem{remark}[theorem]{Remark}
\numberwithin{equation}{section}
\newcommand\blfootnote[1]{%
  \begingroup
  \renewcommand\thefootnote{}\footnote{#1}%
  \addtocounter{footnote}{-1}%
  \endgroup
}
\begin{document}

\title{Corrected approximation strategy for piecewise smooth bivariate functions}
\date{}
\author{Sergio Amat,
\thanks{
 Departamento de Matem\'atica Aplicada y Estad\'{\i}stica.
   Universidad  Polit\'ecnica de Cartagena (Spain).
e-mail:{\tt sergio.amat@upct.es}}\and
David Levin,
\thanks{
School of Mathematical Sciences. Tel-Aviv University, Tel-Aviv (Israel).
 e-mail:{\tt levindd@gmail.com}
}
\and Juan Ruiz-\'Alvarez \thanks{ Departamento de Matem\'atica Aplicada y Estad\'{\i}stica.
   Universidad  Polit\'ecnica de Cartagena (Spain).
e-mail:{\tt juan.ruiz@upct.es}}
}
\maketitle

\begin{abstract}
Given values of a piecewise smooth function $f$ on a square grid within a domain $\Omega$, we look for a piecewise adaptive approximation to $f$. Standard approximation techniques achieve reduced approximation orders near the boundary of the domain and near curves of jump singularities of the function or its derivatives. The idea used here is that the behavior near the boundaries, or near a singularity curve, is fully characterized and identified by the values of certain differences of the data across the boundary and across the singularity curve.
We refer to these values as the signature of $f$.
In this paper, we aim at using these values in order to define the approximation. That is, we look for an approximation whose signature is matched to the signature of $f$. 
Given function data on a grid, assuming the function is piecewise smooth, first, the singularity structure of the function is identified. For example in the 2-D case, we find an approximation to the curves separating between smooth segments of $f$. Secondly, simultaneously we find the approximations to the different segments of $f$. A system of equations derived from the principle of matching the signature of the approximation and the function with respect to the given grid defines a first stage approximation. An second stage improved approximation is constructed using a global approximation to the error obtained in the first stage approximation.
\end{abstract}
\maketitle
\blfootnote{This work was funded by project 20928/PI/18 (Proyecto financiado por la Comunidad Aut\'onoma de la Regi\'on de Murcia a trav\'es de la convocatoria de Ayudas a proyectos para el desarrollo de investigaci\'on cient\'ifica y t\'ecnica por grupos competitivos, incluida en el Programa Regional de Fomento de la Investigaci\'on Cient\'ifica y T\'ecnica (Plan de Actuaci\'on 2018) de la Fundaci\'on S\'eneca-Agencia de Ciencia y Tecnolog\'ia de la Regi\'on de Murcia) and by the Spanish national research project PID2019-108336GB-I00.}

\section{Introduction}

\medskip
\medskip
Given data values of a piecewise smooth function on a square grid within a domain $\Omega$, one looks for high order approximation to $f$. Standard approximation techniques achieve reduced approximation orders near the boundary of the domain and near curves of jump singularities of the function or its derivatives. In a recent paper \cite{ALR} the authors suggest a novel idea for treating the univariate case using a corrected subdivision approximation. This paper presents a two stage approximation algorithm: In the first stage the data is made smooth by subtracting a proper piecewise polynomial data.
In the second stage a smooth approximation to the smooth data is generated, e.g., by subdivision, and the piecewise polynomial used in the first stage is being added to the smooth approximation to produce the final approximation. Other ways of special treatment near boundaries and close to singularities, also for the univariate case, are reviewed in \cite{ALR}.

The approximation procedure suggested in \cite{ALR} begins with the construction of Taylor series' type approximations to the jumps of the derivarives of $f$ across the singular point $s$. This approach is not easily transferred into the multivariate case where the singularities of the function may occur along curves in the 2D case or across surfaces in the 3D case. We propose here an alternative way for approaching the 1D case, a way which is nalurally transferable into the multivariate case. Within this approach we apply a signature operator which is used both for identifying the singularity location and for deriving a smooth data to be used in the second approximation stage.


\section{The 1-D procedure}

In this section, we present the main idea for univariate function approximation. We have chosen to work with spline approximation, but the same idea can be used for approximation with other basis functions, with similar performance.
We describe the fitting strategy using B-spline basis functions and develop the computation algorithm. 

\subsection{The signature of the data}\hfill

\medskip
Let $f$ be a piecewise smooth function on $[0,1]$, defined by combining two pieces $f_1\in C^m[0,s]$ and $f_2\in C^m(s,1]$.
\begin{equation}\label{f1Dnonsmooth0}
f(x)=
\begin{cases} 
f_1(x)& x< s,\\
f_2(x)& x\ge s. \\ 
\end{cases}
\end{equation}
We are given function values $\{f_i\equiv f(ih)\}_{i=0}^N$, $h=1/(N-1)$. We artificially extend the data outside $[0,1]$ with zero values, $\{f_i=0\}_{i=-k}^{-1}$, $\{f_i=0\}_{i=N+1}^{N+k}$. 

An example is shown in blue in Figure \ref{Ex1Dh001d01}. The process of approximation suggested here requires finding the position $s$ of the discontinuity of the function. As described in \cite{ACDD}, in case of discontinuity at $s$, the interval $(jh,(j+1)h)$ containing $s$ can be detected if $h\le h_c$,
\begin{equation}\label{hc}
h_c :=\frac{|[f]|}{4\sup_{t\in\mathbb{R}\backslash\{s\}} |f^{'}(t)|},
\end{equation}
where $[f]$ is the jump in the function $f$. In the following we assume that this interval is identified.

We suggest that significant information about the function, and especially its behavior near the boundaries and near the singularity point $s$ is encrypted in the sequence of differences of $\bar{f}=\{f_i\}_{i=-k}^{N+k}$. Consider the vector $\bar{d}^k(\bar{f})$ of forward $k$th-order differences of $\bar{f}$, with the elements
\begin{equation}\label{kdifferences}
d^k_i=\Delta^kf_i,\ \ i=-k,...,N.
\end{equation}
These differences are going to be tipically $O(1)$ near $x=0$ and $x=1$ and also near $s$,
and for $k\le m$  they $O(h^k)$ away from the end points and the singular point. Fifth order differences of the data in Figure  \ref{Ex1Dh001d01} are shown in Figure \ref{Diff51Dh01}. Away from the singularity and the end points, the values are of magnitude $\sim 10^{-6}$.
Clearly, the values of the five-order differences show us the critical points for the approximation of $f$. Furthermore, they include important information about the function near the critical points. 

\begin{definition}\label{dkgbar}{\bf The signature of a function - $\sigma_k(g)$}\hfill

Let $g$ be a function on $[0,1]$. We denote by $\bar{g}$ the vector of values of $g$ at the points $\{ih\}_{i=0}^N$, padded by $k$ zero values on each side as above.
We refer to the vector of the forward $k$th-order differences of the data vector $\bar{g}$ as the signature of $g$, and we denote it as $\sigma_k(g)=\bar{d}^k(\bar{g})$, $\sigma_k(g)\in\mathbb{R}^{N+k}$.
\end{definition}


\begin{figure}[!ht]
\begin{center}
    \includegraphics[width=4in]{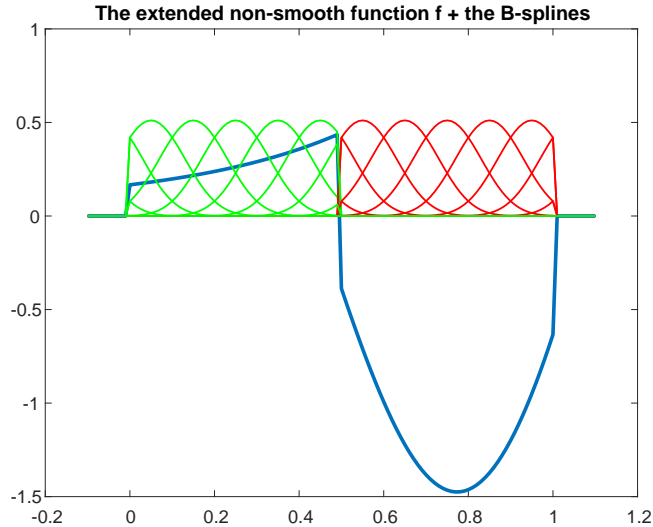}
    \caption{An example of a non-smooth function, with zero padding on both sides, together with the B-spline basis functions used to the right and to the left of the singularity.}
    \label{Ex1Dh001d01}    
\end{center}
\end{figure}

\begin{figure}[!ht]
\begin{center}
    \includegraphics[width=4in]{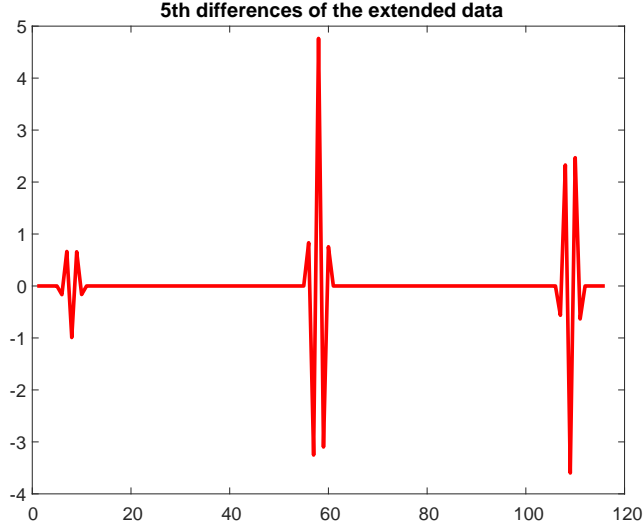}
    \caption{The signature of $f$ using fifth order differences.}
    \label{Diff51Dh01}    
\end{center}    
\end{figure}
\vfill\eject

\subsection{Constructing the first stage approximation}\hfill

\medskip

We choose to build the approximation using $m$th-order spline functions, represented by the B-spline basis.
Let $B^{[m]}_d(x)$ be the B-spline of order $m$ with equidistant knots $\{-md,...,-2d,-d,0\}$. $N_d=1/d+m-1$ is the number of B-splines whose shifts do not vanish in $[0,1]$. The advantage of using spline functions is twofold: 
\begin{itemize}
\item The locality of the B-spline basis functions. 
\item Their approximation power, i.e., if $f\in C^m[a,b]$, there exists a spline $S^{[m]}_d$ such that $\|f-S^{[m]}_d\|_{\infty, [a,b]}\le Cd^{m+1}$.
\end{itemize}
An example of sixth-order
B-splines basis functions used in the numerical example below is shown in Figure \ref{Ex1Dh001d01}. The B-splines used to approximate $f_1$ are shown in green, and the B-splines used to approximate $f_2$ appear in red.

The approach we suggest here involves finding approximations to $f_1$ and $f_2$ simultaneously, using  two separate spline approximations:
\begin{equation}\label{S1}
S_1\equiv S^{[m]}_d|_{[0,s]}(x)=\sum_{i=1}^{N_d}a_{1i}B^{[m]}_d(x-id)|_{[0,s]}\sim f_1,
\end{equation}
and
\begin{equation}\label{S2}
S_2\equiv S^{[m]}_d|_{(s,1]}(x)=\sum_{i=1}^{N_d}a_{2i}B^{[m]}_d(x-id)|_{(s,1]}\sim f_2.
\end{equation}

Following the framework presented in \cite{ALR}, we present below a two stage approximation algorithm: In the first stage the data is made smooth by subtracting a proper piecewise smooth approximation.
In the second stage a smooth approximation to the smooth data is generated and the piecewise approximation used in the first stage is being added to the smooth approximation to produce the final approximation. In this paper the first stage approximation is a piecewise a spline $S$ whose signature, $\sigma_k(S)$, matches the signature of $f$, $\sigma_k(f)$. We define $S$ by a combination of the above $S_1$ and $S_2$.
Since $S$ dependes upon the coefficients $\{a_{1i}\}$ of $S_1$ and the coefficients $\{a_{2i}\}$ of $S_2$, the matching process finds all these unknowns coefficients by solving the minimization problem,

\begin{equation}\label{LS2}
\big[\{a_{1i}\}_{i=1}^{N_d},\{a_{2i}\}_{i=1}^{N_d}\big]=\argmin\|\sigma_k(f)-\sigma_k(S)\|_2^2.
\end{equation}
The minimization is linear w.r.t. the other unknowns. Using the approximation power of $m$th order splines, we can deduce that the minimal value of $\|\sigma_k(f)-\sigma_k(S)\|_2^2$ is $O(Nd^{2m})$.

We denote by $B_{1i}\equiv B^{[m]}_d(\cdot-id)|_{[0,s]}$ the restriction of $B^{[m]}_d(\cdot-id)$ to the interval $[0,s]$, and by  $B_{2i}\equiv B^{[m]}_d(\cdot-id)|_{(s,1]}$ the restriction of $B^{[m]}_d(\cdot-id)$ to the interval $(s,1]$. We concatenate these two sequences of basis functions, $\{B_{1i}\}$ and $\{B_{2i}\}$ into one sequence $\{B_i\}_{i=1}^{2N_d}$, and denote their signatures by $\sigma_i=\sigma_k(\bar{B}_i),\ i=1,..,2N_d$.
The induced system of linear equations for the splines' coefficients $a=(\{a_{1i}\}_{i=1}^{N_d},\{a_{2i}\}_{i=1}^{N_d})$ is $Aa=b$ defined as follows:

\begin{equation}\label{Aijns}
A_{i,j}=\langle \sigma_i,\sigma_j \rangle ,\ \ 1\le i,j \le 2N_d,
\end{equation}
and
\begin{equation}\label{bins}
b_i=\langle \sigma_i,\sigma(f)\rangle ,\ \ 1\le i \le 2N_d.
\end{equation}

\begin{remark}
Due to the locality of the B-splines, some of the basis functions $\{B_{1i}\}$ and $\{B_{2i}\}$ may be identicaly $0$. It thus seems better to use only the non-zero basis functions. From our experience, since we use the general inverse approach for solving the system of equations, using all the basis functions gives the same solution.
As demonstrated in the 2-D procedure below, other approximation spaces  such as tensor product polynomials or trigonometric functions, may be used, with similar approximation results. 
\end{remark}

\begin{remark}
The above construction can be carried out to the case of several singular points.
\end{remark}

\subsubsection{Univariate numerical example}\label{NE1}\hfill

\medskip
We consider the approximation of the function
\begin{equation}\label{Example1f}
f(x)=
\begin{cases} 
\frac{1}{1+(x-1)^2}& x\ge 0.5,\\
\frac{1}{1+(x-1)^2}+(x+1.5)cos(4x)& x< 0.5,\\ 
\end{cases}
\end{equation}
given its values on a uniform grid with $h=0.01$. We have used the signature of $f$ defined by fifth-order forward differences of the extended data  and computed an approximation using fifth-degree splines with equidistant knots' distance $d=0.1$.
For this case, the matrix $A$ is of size $32\times 32$, and $rank(A)=22$. We have solved the linear system using Matlab general inverse procedure pinv, together with an iterative refinement algorithm (described in \cite{Wilk}, \cite{Moler}) to obtain a high precision solution. We denote the resulting piecewise spline approximation by $S^*$.The results are shown in Figure \ref{ExError1Dh001d01} depicting the approximation error $f-S^*$, showing maximal error $\sim 1.75\times 10^{-4}$.

\begin{figure}[!ht]
\begin{center}
    \includegraphics[width=4in]{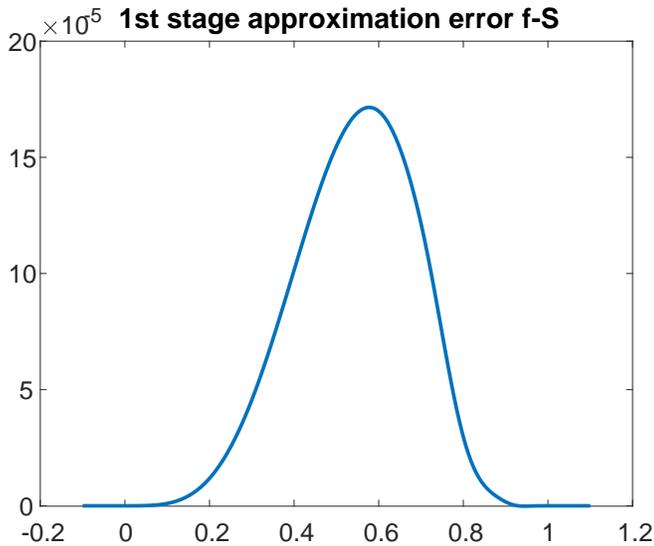}
    \caption{The first stage approximation error $f-S^*$.}
    \label{ExError1Dh001d01}    
\end{center}
\end{figure}

\subsection{Some analysis for the first stage approximation}\label{someanalysis}\hfill

\medskip
Let us estimate the minimal value of $F(S)\equiv \|\sigma_k(f)-\sigma_k(S)\|_2^2$. We do it by finding a function $\tilde{S}$ for which $F(\tilde{S})$ is small. Using the value $F(\tilde{S})$ we can deduce some quantitative estimates for the function $S^*$ which is the minimizer of $F(S)$.

Recalling that $S$ is a piecewise spline function of order $m\ge k$, it follows that both $\sigma_k(f)$ and $\sigma_k(S)$ are $O(h^k)$ away from the boundaries and the singularity, and so is $\sigma_k(f-S)$. Let the knots' distance $d$ be small enough such that the number of B-splines influencing each of the intervals $[0,s]$ and $[s,1]$ is greater that $2k+2$. We can find $\tilde{S}_1$ which coincides with $f_1$ at $k+1$ data points at both ends of the interval $[0,s]$, and $\tilde{S}_2$ which coincides with $f_2$ at $k+1$ data points at both ends of the interval $[s,1]$. Hence, the combination $\tilde{S}$ of $\tilde{S}_1$ and $\tilde{S}_2$ satisfies $\sigma_k(f-\tilde{S})=0$ near $0$, near $s$ and near $1$. Combining this with the observation that $\sigma(f-\tilde{S})\le Ch^k$ away from $0$, $s$ and $1$, it follows that 
\begin{equation}\label{FStilde}
F(\tilde{S})= \|\sigma_k(f-\tilde{S})\|_2^2\le C_0Nh^{2k}.
\end{equation}

Denoting by $S^*$ the minimizer of $\|\sigma_k(f)-\sigma_k(S)\|_2^2$,
\begin{equation}\label{sigmabound}
\|\sigma_k(f-S^*)\|_\infty=\|\sigma_k(f)-\sigma_k(S^*)\|_\infty\le C_1N^{\frac{1}{2}}h^k.
\end{equation}
Thus we have shown that the signature of $S^*$ will be close to the signature of $f$. In order to show that $S^*$ is close to $f$ we need to consider the inverse of the signature operator.
Since $f(ih)=S^*(ih)=0$ for $i=-k,...,-1$, it follows that
$$|f(ih)-S^*(ih)|<C_2N^{\frac{1}{2}}h^k,\ \ 0\le i\le k.$$
Similarly,
$$|f(ih)-S^*(ih)|<C_2N^{\frac{1}{2}}h^k,\ \ N-k\le i\le N.$$
Knowing the error bounds at points near $0$ and near $1$, and in view of
the bound (\ref{sigmabound}), it follows that
\begin{equation}\label{pointbound}
|f(ih)-S^*(ih)|<C_3\|\Sigma^{-1}\|_1N^{\frac{1}{2}}h^k,\ \ 0\le i\le N,
\end{equation}
where $\Sigma$ is the matrix representation of the linear operator defining the signature. For example, for $k=3$ the signature is defined by third-order differences, and the relevant matrix $\Sigma$ is the $N\times N$ 4-diagonal matrix with the elements:
$$\Sigma_{i-2,i}=-1,\ \ \Sigma_{i-1,i}=3,\ \ \Sigma_{i,i}=-3,\ \ \Sigma_{i+1,i}=1.$$
It turns out that  $\|\Sigma^{-1}\|_1=O(N^2)$ which is quite bad to be used in  (\ref{pointbound}), and it calls for an alternative definition of a signature, such that its inverse is bounded independently of $N$. 

Another issue is how close is $S^*$ to $f$ on $[0,1]$. In any case, as we discuss below, we consider $S^*$ as the first approximation to $f$, and the second stage approximation is obtained by correcting the first one.

\subsection{The corrected approximation}

The 1-D procedure developed in \cite{ALR} is a two-stage procedure. In the first stage the non-smooth data is transformed into a smooth data by subtracting a one-sided polynomial with appropriate jumps in its derivatives. The second stage is  a correction step using a standard subdivision process for approximating the residual smooth data.
Here again the approximation $S^*$ obtained by matching the signature of $f$ is just the first stage in computing the final approximation to $f$. The second stage is based upon the observation that the error $e=f-S^*$, evaluated at the data points $\{ih\}_{i=0}^N$, forms a `smooth' data sequence.
Applying an appropriate smooth approximation procedure to the data $\{e(ih)\}_{i=0}^N$, we obtain an approximation $\tilde{e}(x)$ to $e$. The final corrected approximation is defined as
\begin{equation}\label{finalAppr}
\tilde{f}\equiv S^*+\tilde{e}.
\end{equation}
The approximation error $f-\tilde{f}$ is the same as the error $e-\tilde{e}$.
To estimate this error we should examine how smooth is $e$.
If $f$ and its derivatives are discontinuous at $s$, then $e=f-S^*$ would also be discontinuous at $s$. However, as shown below, the smaller the signature of $e=f-S^*$ is made, the smaller is the jump in $e$ and its derivatives across $s$.

To build the approximation to $e$ we may use any univariate approximation method. However, it is simpler to understand the approximation error if we use a local approximation method, such as subdivision or quasi-interpolation by splines.
We assume that the approximation to $e$ is defined as
\begin{equation}\label{appr2e}
\tilde{e}(x)=\sum_{i=-k}^{N+k}e(ih)\phi(x/h-i),
\end{equation}
where $\phi$ is of finite support, $\supp (\phi)=\sigma$, and polynomials of degree $\le \ell$ are reproduced,
\begin{equation}\label{phip}
\sum_{i\in \mathbb{Z}}p(ih)\phi(x/h-i)=p(x),\ \ \ p\in\Pi_\ell.
\end{equation}

If $\ell< m$ ($m$ being the order of the spline $S^*$ and the smoothness degree of $f_1$ and $f_2$), then at points $x$ which are of distance $\ge \sigma$ from $s$ and from $0$ and $1$, we have 
\begin{equation}\label{offs}
|e(x)-\tilde{e}(x)|\le Ch^{\ell+1}.
\end{equation}
Let us check the approximation power near $s$. A similar argument holds near $0$ and $1$. Assuming that $jh\le s<(j+1)h$ we may also assume (by subtracting a polynomial of degree $k-1$) that 
\begin{equation}\label{equal0}
e(ih)=0,\ \ j-k<i\le j.
\end{equation}

Using (\ref{sigmabound}) and (\ref{equal0}) it follows that 
\begin{equation}\label{equaleps}
e(ih)=O(h^{k-\frac{1}{2}}),\ \ j<i\le j+\ell.
\end{equation}
Therefore, both $e$ and $\tilde{e}$ are $O(h^{k-\frac{1}{2}})$ near $s$,
hence near $s$,
\begin{equation}\label{nears}
|e(x)-\tilde{e}(x)|\le Ch^{k-\frac{1}{2}}.
\end{equation}

\subsubsection{Back to the numerial example - The corrected approximation}\hfill

\medskip
Coming back to the numerical example in Section \ref{NE1}, we compute the corrected approximation by applying cubic spline interpolation to the smoothed data and adding $S^*$ to it. While the maximal error in the first stage approximation $S^*$ is $\sim 1.75\times 10^{-4}$, the maximal error in the corrected approximation is $\sim 1.7\times 10^{-9}$.

\begin{figure}[!ht]
\begin{center}
    \includegraphics[width=5in]{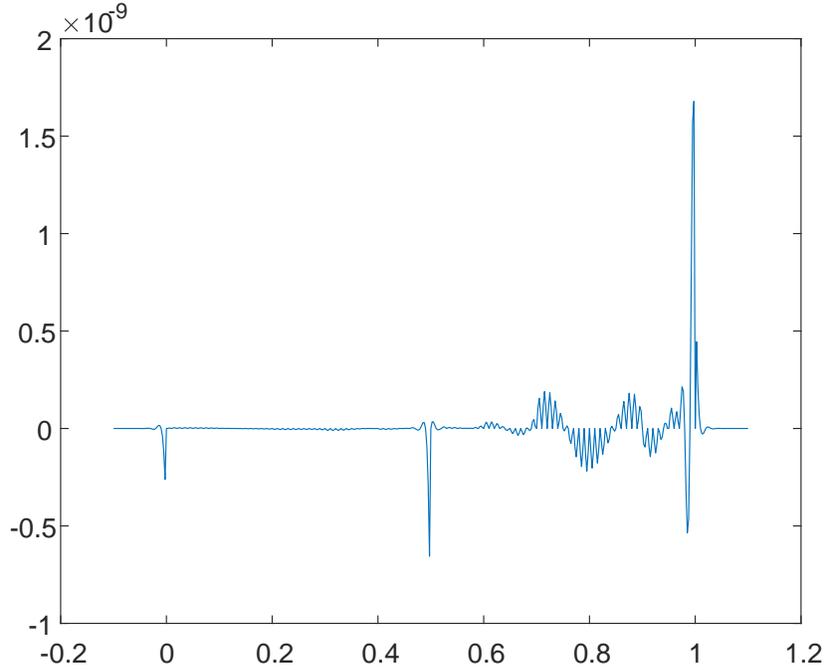}
    \caption{The corrected approximation error $f-\tilde{f}$.}
    \label{ErrorinCorrected1Dh001d01}    
\end{center}
\end{figure}

\section{The 2-D procedure}

Let $f$ be a piecewise smooth function on $[0,1]^2$, defined by the combination of two pieces $f_1\in C^m[\Omega_1]$ and $f_2\in C^m[\Omega_2]$, $\Omega_2=[0,1]^2\setminus \Omega_1$. We assume that we are given function values $\{f_{ij}\equiv f(ih,jh)\}_{i,j=0}^N$, $h=1/(N-1)$. Artificially we extend the data outside $[0,1]^2$ with zero values, $\{f_{ij}=0\}_{i=-k}^{-1}$, $\{f_{ij}=0\}_{i=N+1}^{N+k}$, $\{f_{ij}=0\}_{j=-k}^{-1}$, $\{f_{ij}=0\}_{j=N+1}^{N+k}$.   We do not know the position of the dividing curve separating $\Omega_1$ and $\Omega_2$. We denote this curve by $\Gamma$, and we assume that it is a $C^m$-smooth curve. As in the 1-D case, the existence of a singularity curve in $[0,1]^2$ significantly influences standard approximation procedures,  especially near $\Gamma$, and the approximation power also deteriorates near the boundaries. As in the univariate case, the approximation procedure described below is based upon matching a signature of the function and the approximant. The computation algorithm involves finding approximations to $f_1$ and $f_2$ simultaneously, followed by a correction step. The first stage is the approximation of $\Gamma$.

\subsection{Finding the singularity curve $\Gamma$}\hfill

\medskip
Finding a good approximation of the singularity curve $\Gamma$ is more involved than finding the singularity point $s$ in the univariate case. To simplify the presentation we assume that $\Omega_1$  and $\Omega_2$ are simply connected domains and the set of data points $P=\{(ih,jh)\ | \ 0\le i,j\le N\}$ is consequently divided into two sets:
\begin{equation}\label{P1P2}
\begin{cases}
P_1=\{(ih,jh)\in \Omega_1,\ 0\le i,j\le N\}, \\
P_2=\{(ih,jh)\in \Omega_2,\ 0\le i,j\le N\}. \\
\end{cases}
\end{equation}

As in the univariate case we assume that $h$ is small enough to assure the detection of the singularity interval along each horizontal and vertical line in $[0,1]^2$. This involves estimating a critical $h_c$ as in $(\ref{hc})$ along horizontal and vertical lines (as in \cite{ACDD}).
For each $0\le j\le N$ consider the data values along the horizontal line $y=jh$, $\{f_{ij}\}_{i=0}^N$. As in the univariate case, we find the interval containing the singularity, if exists, and denote the midpoint of this interval as $s_j$. The points
$\{(s_j,jh)\}$ found are at distance $<h/2$ from $\Gamma$. Similarly, we find points along the vertical lines $\{(ih,t_i)\}$ which are near $\Gamma$. An important outcome of this process is that it identifies and separate data points from both sides of the singularity curve. Another way of achieving this is suggested in \cite{AmirLevin}.

Let us continue the description of the procedure alongside the following numerical example:

Consider the piecewise smooth function on $[0,1]^2$ with a jump singularity across the curve $\Gamma$ which is the quarter circle defined by $(x+1)^4+(y+1)^4=10$. The test function is shown in Figure \ref{testf2Dnonsmooth4} and is defined as
\begin{equation}\label{testf2Dnonsmooth}
f(x,y)=
\begin{cases} 
(x+y+2)cos(4x)+sin(4(x+y)),& (x+1)^4+(y+1)^4\ge 10,\\
sin(4(x+y)), & (x+1)^4+(y+1)^4< 10.\\ 
\end{cases}
\end{equation}

\begin{figure}[!ht]
\begin{center}
    \includegraphics[width=5in]{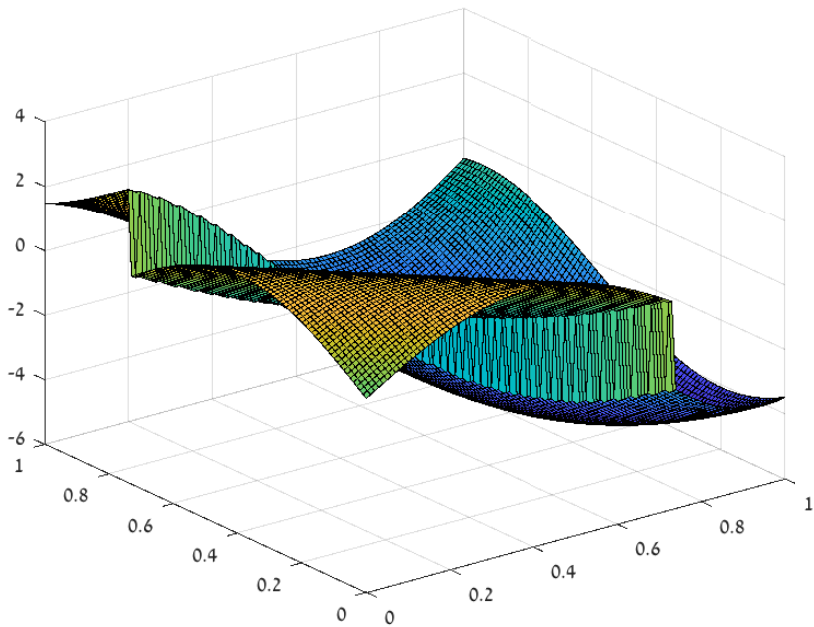}
    \caption{The test function for the 2D non-smooth case.}
    \label{testf2Dnonsmooth4}    
\end{center}
\end{figure}

Let us denote by $Q_0$ all the points $\{(s_j,jh)\}$ and 
$\{(ih,t_i)\}$ found as described above for $h=0.025$. 
We display the points $Q_0$ (in red) in Figure \ref{Max1DiffPoints}, on top of the curve $\Gamma$ (in green).
\begin{figure}[!ht]
\begin{center}
    \includegraphics[width=4in]{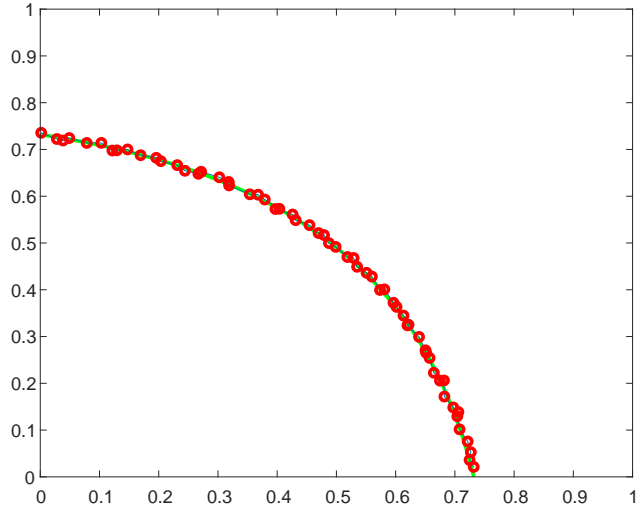}
    \caption{The singularity curve $\Gamma^*$ (green) and the points $Q_0$.}
    \label{Max1DiffPoints}    
\end{center}
\end{figure}
Now we use these points to construct a tensor product cubic spline $D(x,y)$, whose zero level curve defines the approximation to $\Gamma$. To construct $D$ we first overlay on $[0,1]^2$ a net of $m\times m$ points, $Q$, $Q\subset P$. These are the green points displayed in Figure \ref{SignedDistanceAppr4}, for $m=9$.

\begin{figure}[!ht]
\begin{center}
    \includegraphics[width=4in]{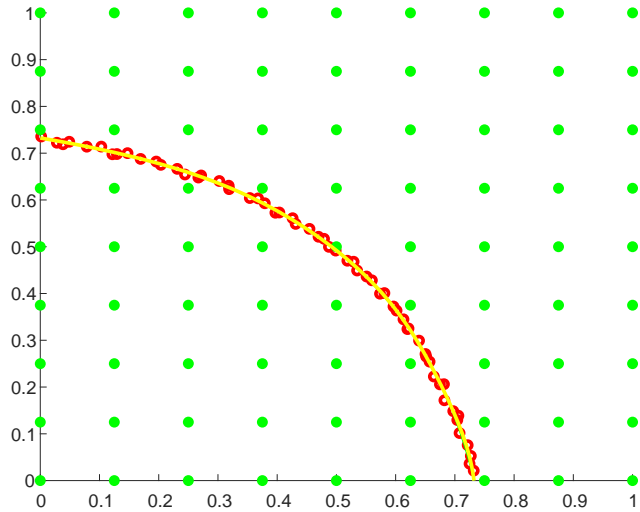}
    \caption{The approximation of the singularity curve $\Gamma$ (yellow).}
    \label{SignedDistanceAppr4}    
\end{center}
\end{figure}

To each point in $Q$ we assign the value of its distance from the set $Q_0$, with a plus sign for the points which are in $P_1$, and a minus sign for the points in $P_2$. To each point in $Q_0$ we assign the value zero. The spline function $D$ is now defined by the least-squares approximation to the values at all the points $Q\cup Q_0$. We have used here tensor product cubic spline on a uniform mesh with knots' distance $d=0.25$. It can be shown that the coresponding normal equations are non-singular if $d>1/m$. We denote the zero level curve of the resulting $D$ as $\Gamma_0$, and this curve is depicted in yellow in Figure \ref{SignedDistanceAppr4}. It seems that 
$\Gamma_0$ is already a good approximation to $\Gamma$ (in green), and yet it may not separate completely the points $P_1$ from the points $P_2$.

\vfill\eject
\subsection{Constructing the approximation}\hfill

As in the univariate case, the approximation strategy is based upon matching some signature of the approximation to the signature of the given function data.

\begin{definition}\label{dkgbar2}{\bf The signature of a function - $\sigma(g)$}\hfill

Let $g$ be a function on $[0,1]^2$. We denote by $\bar{g}$ the matrix of values of $g$ at the points $\{ih,jh)\}_{i=0}^N$, padded by two layers of zero values on each side as above. We define the signature of $g$ as the matrix of discrete biharmonic operator applied to $g$, namely,
\begin{equation}\label{sigma2} 
\sigma(g)=\Delta_h^2(\bar{g}).
\end{equation}
\end{definition}
\medskip

Considering the test function (\ref{testf2Dnonsmooth}), discretized using a mesh size $h=0.01$, padded by ten layers of zero values all arround, its signature is displayed in Figure \ref{sigmatest2D} below.

\begin{figure}[!ht]
\begin{center}
    \includegraphics[width=5in]{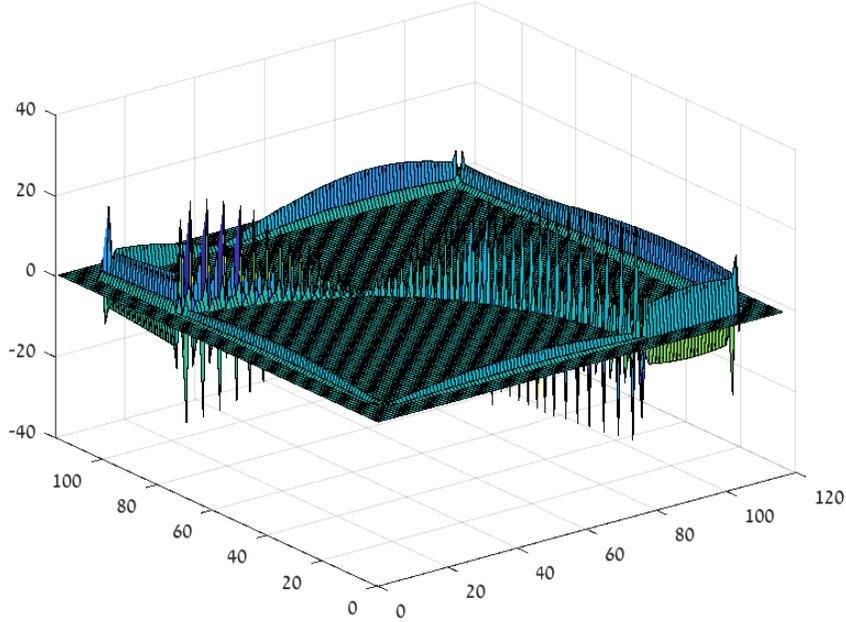}
    \caption{The signature $\sigma(f)$ of the test function (\ref{testf2Dnonsmooth}).}
    \label{sigmatest2D}    
\end{center}    
\end{figure}

The first stage of the approximation process is similar to the construction in the univariate case define by equations (\ref{S1}), (\ref{S2}), (\ref{LS2}). We look here for an approximation $S$ to $f$ which is a combination of two bivariate splines components:
\begin{equation}\label{S12D}
S_1(x,y)=\sum_{i=1}^{N_d}\sum_{j=1}^{N_d}a_{1ij}B_{ij}(x,y), \ \ \ (x,y)\in\tilde{\Omega}_1,
\end{equation}
\begin{equation}\label{S22D}
S_2(x,y)=\sum_{i=1}^{N_d}\sum_{j=1}^{N_d}a_{2ij}B_{ij}(x,y),
\ \ \  (x,y)\in\tilde{\Omega}_2,
\end{equation}
where
\begin{equation}\label{Bij}
B_{ij}(x,y)=B^{[m]}_d(x-id)B^{[m]}_d(y-jd),
\end{equation}
and
\begin{equation}\label{OmegaD1}
\tilde{\Omega}_1=\{(x,y)\ |\ D(x,y)\ge 0,\ (x,y)\in [0,1]^2\ \},
\end{equation}
\begin{equation}\label{OmegaD2}
\tilde{\Omega}_2=\{(x,y)\ |\ D(x,y)< 0,\ (x,y)\in [0,1]^2\ \}.
\end{equation}

\begin{definition}\label{dkgbar3}{\bf The signature of $S$ - $\sigma(S)$}\hfill

We denote by $\bar{S}$ the matrix of values
\begin{equation}\label{sigmaS}
\begin{cases}
\sum_{i=1}^{N_d}\sum_{j=1}^{N_d}a_{1ij}B_{ij}(x,y),\ \ (x,y)\in P_1\ ,\\
\sum_{i=1}^{N_d}\sum_{j=1}^{N_d}a_{2ij}B_{ij}(x,y),\ \ (x,y)\in P_2\ .\\
\end{cases}
\end{equation}
padded by two of layers  zero values on each side as above. The signature of $S$ is $\sigma(S)=\Delta_h^2(\bar{S})$.
\end{definition}

Having defined the signature of $f$ and of $S$, we now look for an approximation $S$
such that the signatures of $f$ and $S$ are matched in the least-squares sense:
\begin{equation}\label{LS2-2D}
\big[\{a_{1ij}\}_{i,j=1}^{N_d},\{a_{2ij}\}_{i,j=1}^{N_d}\big]=\argmin\|\sigma(f)-\sigma(S)\|_2^2.
\end{equation}

\subsubsection{The induced system of equations}\hfill

\begin{definition}\label{dkgbar4}{\bf The signature of the B-splines.}\hfill

For $0\le i,j \le N_d$ let $\bar{B}_{1ij}$ the matrix of $N\times N$ values
\begin{equation}
\begin{cases}
B_{ij}(x,y),\ \ (x,y)\in P_1,\\
0,\ \ (x,y)\in P_2,\\
\end{cases}
\end{equation}
padded by zeros. The signature of $\bar{B}_{1ij}$, $\sigma(\bar{B}_{1ij})$ include the matrix of $\Delta_h^2$ values of $B_{ij}$ on $P_1$. Similarily we define $\sigma(\bar{B}_{2ij})$ using the values of $B_{ij}$ on $P_2$:
\begin{equation}
\begin{cases}
B_{ij}(x,y),\ \ (x,y)\in P_2,\\
0,\ \ (x,y)\in P_1.\\
\end{cases}
\end{equation}
We rearrange the signatures
$\{\sigma(\bar{B}_{1ij})\}_{i,j=1}^{N_d},\{\sigma(\bar{B}_{2ij})\}_{i,j=1}^{N_d}$ as a vector $\sigma(\bar{B})$ of length $2N_d^2$ of signatures.
\end{definition}

\medskip
Rearranging the unknwons $\{a_{1ij}\}_{i,j=1}^{N_d},\{a_{2ij}\}_{i,j=1}^{N_d}$ as a vector $a$ of length $2N_d^2$, and rearranging each signature as a vector of length $N^2$, the linear system of normal equations for the spline coefficients is $Aa=b$, where

\begin{equation}\label{Aijns2}
A_{k,\ell}=\langle \sigma(\bar{B})_k,\sigma(\bar{B})_\ell \rangle ,\ \ 1\le k,\ell \le 2N_d^2,
\end{equation}
and
\begin{equation}\label{bins2}
b_k=\langle \sigma(\bar{B})_k,\sigma(f)\rangle ,\ \ 1\le k \le 2N_d^2.
\end{equation}

The above framework is applied to the numerical example with the test function (\ref{testf2Dnonsmooth}), with $h=0.01$ and sixth-order tensor product splines with knots distance $d=0.1$. Solving the above linear system of equations for the spline coefficients gives us the first stage approximation $S^*$, combined of two segments,
$S^*_1$ on $\tilde{\Omega}_1$ and $S^*_2$ on $\tilde{\Omega}_2$.

In Figure \ref{2D1stApperror}
we show the error $e=f-S^*$ at the grid points. We observe that $e$ is quite smooth, which means that the piecewise spline approximation $S^*$ has captured well the singularity structure of $f$, both across $\Gamma$ and along the boundaries.

\begin{figure}[!ht]
\begin{center}
    \includegraphics[width=5in]{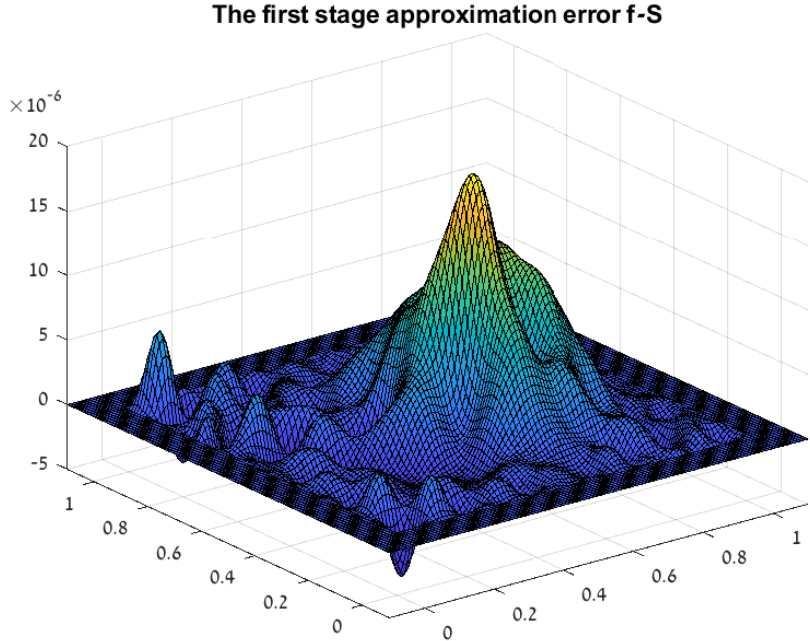}
    \caption{The error in the first stage approximation (\ref{2D1stApperror}).}
    \label{2D1stApperror}    
\end{center}
\end{figure}

\subsection{Second stage - corrected approximation}\label{2Dcorrected}\hfill

\medskip
After computing the first stage approximation $S^*$ obtained by matching the signature of $f$, the second stage is based upon the observation that the error $e=f-S^*$, evaluated at the data points $\{(ih,jh)\}_{i,j=0}^N$, forms a `smooth' data sequence.
Applying an appropriate smooth approximation procedure to the data $\{e(ih,jh)\}_{i,j=0}^N$, we obtain an approximation $\tilde{e}(x)$ to $e$. The final corrected approximation is defined as
\begin{equation}\label{finalAppr}
\tilde{f}\equiv S^*+\tilde{e}.
\end{equation}
Continuing the above numerical example, we have applied fifth order tensor product spline interpolation to $e$. Figure \ref{2D2ndApperror} depicts the values of the error in the final corrected approximation $\tilde{f}$, evaluated on a fine grid. We observe that the absolute value of the maximal error is $2.8\times 10^{-7}$, and it is attained  at the boundary. There are also errors of magnitude $\sim 10^{-8}$ near the singularity curve, but everywhere else the errors are of magnitude $\sim 10^{-10}$.
We recall that $f$ is piecewise-defined on $\Omega_1$ and $\Omega_2$, while $S^*$ is piecewise-defined on $\tilde{\Omega}_1$ and $\tilde{\Omega}_2$. In comparing $f$ and $\tilde{f}$, we let $S^*$ be piecewise-defined on $\Omega_1$ and $\Omega_2$, with the same $S^*_1$ and $S^*_2$. Similar results are obtained if we let $f$ be  piecewise-defined on $\tilde{\Omega}_1$ and $\tilde{\Omega}_2$, with the same $f_1$ and $f_2$.

\begin{figure}[!ht]
\begin{center}
    \includegraphics[width=5in]{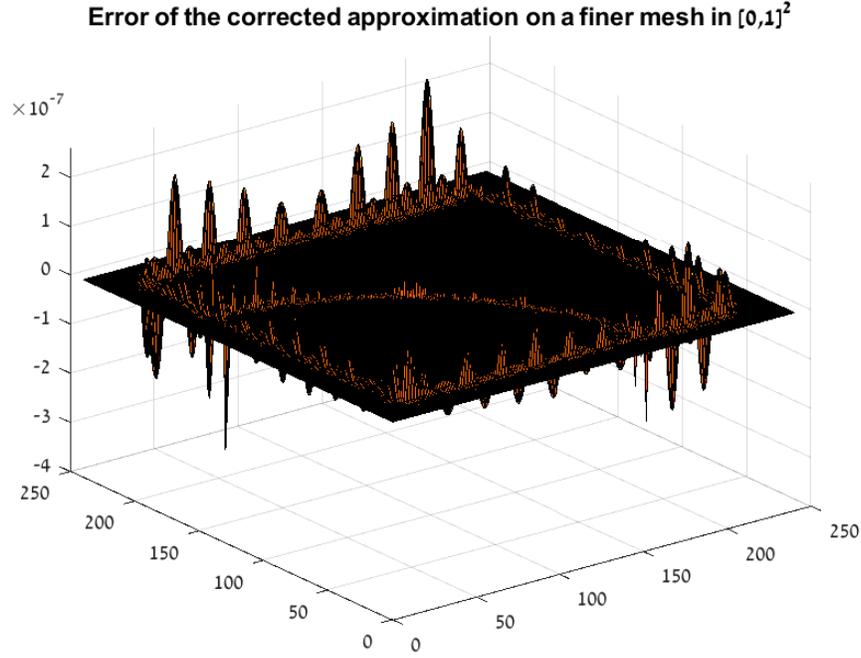}
    \caption{The error in the second stage corrected approximation $f-\tilde{f}$.}
    \label{2D2ndApperror}    
\end{center}
\end{figure}

\subsection{Second numerical example - Two singularity curves}\label{2curves}\hfill

\medskip
Consider the case of two disjoint singularity curves, subdividing the domain into three sub-domains, $\Omega_1,\Omega_2,\Omega_3$, separated by two smooth curves $\Gamma_1$ and $\Gamma_2$, and a piecewise-defined function as in Figure \ref{testf2curves}.

\begin{figure}[!ht]
\begin{center}
    \includegraphics[width=4in]{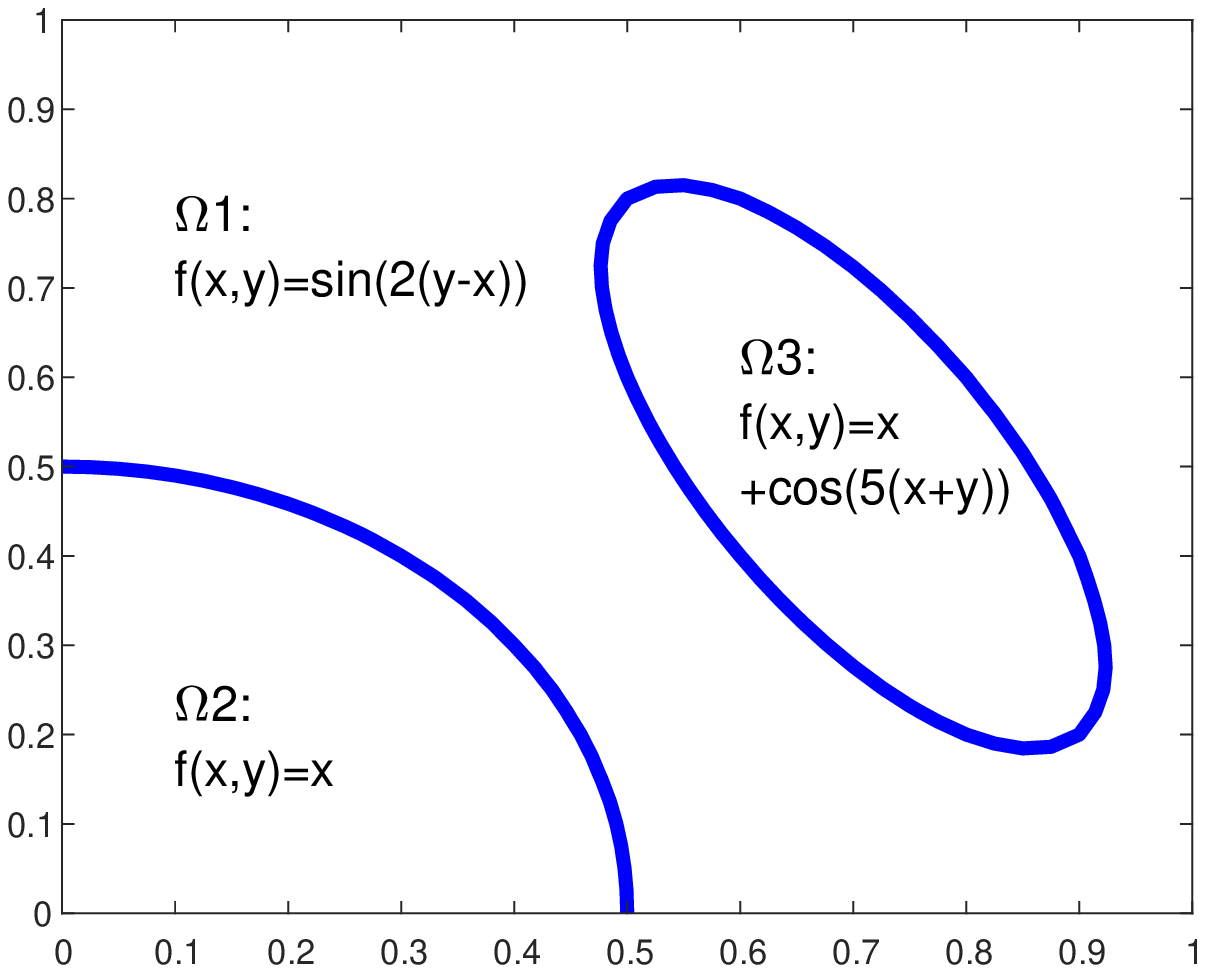}
    \caption{The test function for Example \ref{2curves}.}
    \label{testf2curves}    
\end{center}
\end{figure}

Function values are given on a uniform grid with $h=0.01$. In Figure \ref{testf2D2curvesh001d01} we display the extended data, padded with zeros arround the square. 

\begin{figure}[!ht]
\begin{center}
    \includegraphics[width=4in]{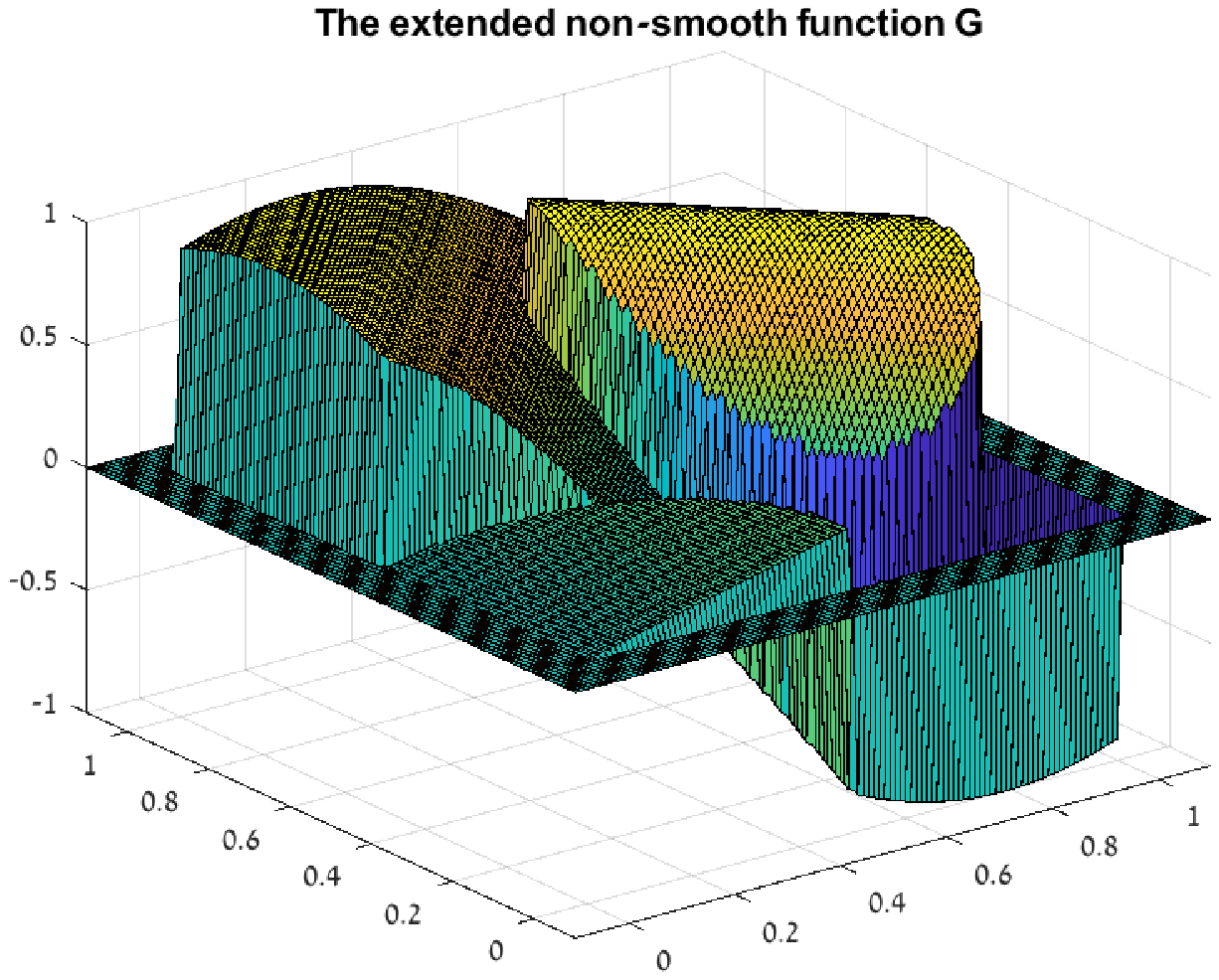}
    \caption{The test function for Example \ref{2curves}.}
    \label{testf2D2curvesh001d01}    
\end{center}
\end{figure}

Function values are given on a uniform grid with $h=0.01$. In Figure \ref{testf2D2curvesh001d01} we display the extended data, padded with zeros arround the square. 

\subsubsection{Approximating the singularity curves}

As in the first example, we assume that $h$ is small enough to assure the detection of the singularity interval along each horizontal and vertical line in $[0,1]^2$ (\cite{ACDD}).
For each horizontal line $y=jh$ and each vertical line $x=ih$ we detect the intervals containing a singular point, and collect all the midpoints of these intervals, denoting this set of points as $Q_0$. Within the detection algorithm we also include a proper ordering algorithm by which we obtain a subdivision of the set of data points $P=\{(ih,jh)\}_{i,j=0}^N$ into three disjoint sets, $P_k\in \Omega_k$, $k=1,2,3$. $P_1$ denotes the set which has neighbors in the two other sets, to be denoted as $P_2$ and $P_3$.

As in the case of one singularity curve, we construct a tensor product cubic spline $D(x,y)$, whose zero level curves approximate $\Gamma_1$ and $\Gamma_2$. To construct $D$ we overlay on $[0,1]^2$ a net of $11\times 11$ points, $Q$, $Q\subset P$.
\begin{figure}[!ht]
\begin{center}
    \includegraphics[width=4in]{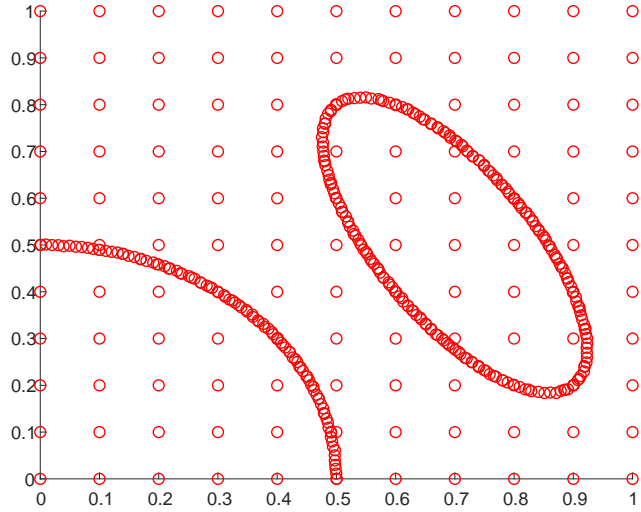}
    \caption{The points $Q\cup Q_0$ used for approximating the singularity curves.}
    \label{QQ02curves}    
\end{center}
\end{figure}
To each point in $Q$ we assign the value of its distance from the set $Q_0$, with a plus sign for the points which are in $P_1$, and for the points in $P_2$ or $P_3$. To each point in $Q_0$ we assign the value zero. The spline function $D$ is now defined by the least-squares approximation to the values at all the points $Q\cup Q_0$ displayed in Figure \ref{QQ02curves}. We have used here tensor product cubic spline on a uniform mesh with knots' distance $d=0.2$. We denote the zero level curves of the resulting $D$ as $\tilde{\Gamma}_1$, $\tilde{\Gamma}_2$, and these curves are depicted in yellow in Figure \ref{SignedDistanceAppr2cirves}. These curves provide a good approximation to $\Gamma_1$ and $\Gamma_2$. To improve the approximation quality we assign higher weight ($\times 100$) to the points $Q_0$ in the least-squares cost function defining $D$.

\begin{figure}[!ht]
\begin{center}
    \includegraphics[width=4in]{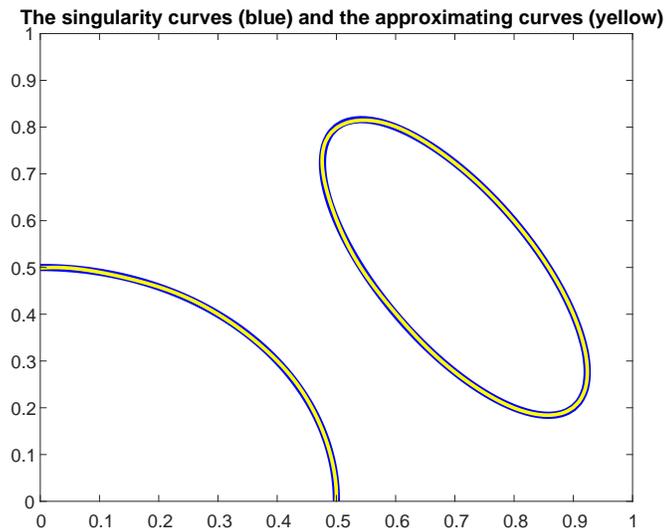}
    \caption{The singularity curves for Example \ref{2curves}, in blue, and their approximations, in yellow.}
    \label{SignedDistanceAppr2cirves}    
\end{center}
\end{figure}

Considering the test function defined in Figure \ref{testf2curves}, discretized using mesh size $h=0.01$ padded by ten layers of zero values all arround, its $\Delta_h^2$ signature is displayed in Figure \ref{sigmatestf2curves} below.

\begin{figure}[!ht]
\begin{center}
    \includegraphics[width=5in]{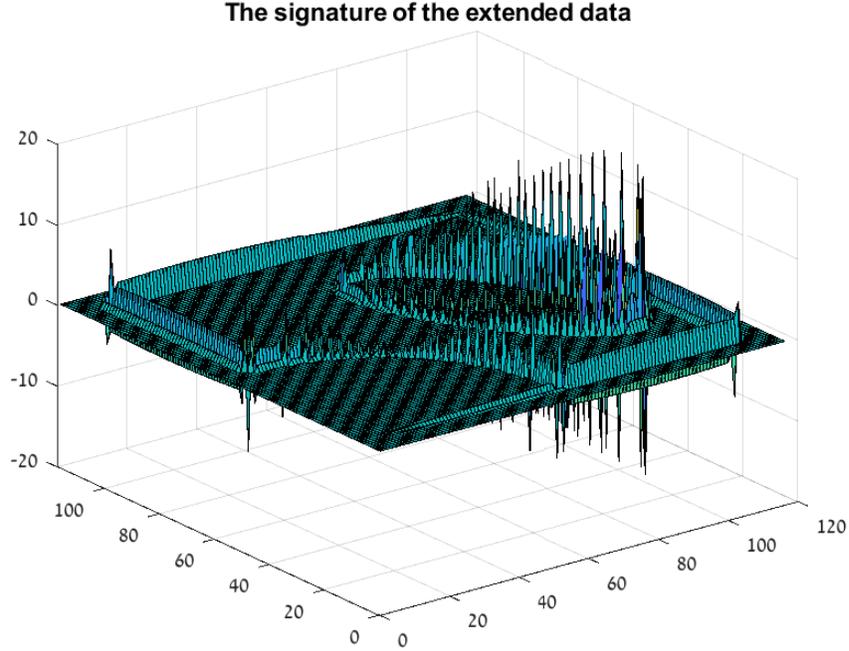}
    \caption{The signature $\sigma(f)$ of the test function with two singularity curves.}
    \label{sigmatestf2curves}    
\end{center}    
\end{figure}

For the first stage approximation process we look here for an approximation $S$ to $f$ which is a combination of three bivariate splines components:
\begin{equation}\label{S12D2}
S_1(x,y)=\sum_{i=1}^{N_d}\sum_{j=1}^{N_d}a_{1ij}B_{ij}(x,y), \ \ \ (x,y)\in\tilde{\Omega}_1,
\end{equation}
\begin{equation}\label{S22D2}
S_2(x,y)=\sum_{i=1}^{N_d}\sum_{j=1}^{N_d}a_{2ij}B_{ij}(x,y),
\ \ \  (x,y)\in\tilde{\Omega}_2,
\end{equation}
\begin{equation}\label{S32D3}
S_3(x,y)=\sum_{i=1}^{N_d}\sum_{j=1}^{N_d}a_{3ij}B_{ij}(x,y),
\ \ \  (x,y)\in\tilde{\Omega}_3,
\end{equation}

where $\tilde{\Omega}_1$, $\tilde{\Omega}_2$, $\tilde{\Omega}_3$ are the three disjoint domains defined by the bicubic spline $D$.

\begin{definition}\label{dkgbar4}{\bf The signature of $S$ - $\sigma(S)$}\hfill
As in Definition \ref{dkgbar3} above, we define $\bar{S}$ as the matrix of values of $S$ at the grid points,
padded by two layers of zero values all arround, and compute its signature $\sigma(S)=\Delta_h^2(\bar{S})$.
\end{definition}

Having defined the signature of $f$ and of $S$, we now look for an approximation $S$
such that the signatures of $f$ and $S$ are matched in the least-squares sense:
\begin{equation}\label{LS2-2D}
\big[\{a_{1ij}\}_{i,j=1}^{N_d},\{a_{2ij}\}_{i,j=1}^{N_d},\{a_{3ij}\}_{i,j=1}^{N_d}\big]=\argmin\|\sigma(f)-\sigma(S)\|_2^2.
\end{equation}

\subsubsection{The induced system of equations}\hfill

For $r=1,2,3$, the signature $\sigma(\bar{B}_{rij})$ include the matrix of $\Delta_h^2$ values of $B_{ij}$ restricted to $P_r$. 
We rearrange the signatures
$\{\sigma(\bar{B}_{rij})\}_{i,j=1}^{N_d}$, $r=1,2,3$, as a vector $\sigma(\bar{B})$ of length $3N_d^2$ of signatures.

\medskip
Rearranging the unknwons $\{a_{rij}\}_{i,j=1}^{N_d}$, $r=1,2,3$, as a vector $a$ of length $3N_d^2$, the linear system of normal equations for the spline coefficients is $Aa=b$, where

\begin{equation}\label{Aijns2curves}
A_{k,\ell}=\langle \sigma(\bar{B})_k,\sigma(\bar{B})_\ell \rangle ,\ \ 1\le k,\ell \le 3N_d^2,
\end{equation}
and
\begin{equation}\label{bins2curves}
b_k=\langle \sigma(\bar{B})_k,\sigma(f)\rangle ,\ \ 1\le k \le 3N_d^2.
\end{equation}

The above framework is applied to the numerical example with the test function defined in Figure \ref{testf2curves}, with $h=0.01$ and sixth-order tensor product splines with knots distance $d=0.1$. Solving the above linear system of equations for the spline coefficients gives us the first stage approximation $S^*$, combined of two segments,
$S^*_1$ on $\tilde{\Omega}_1$, $S^*_2$ on $\tilde{\Omega}_2$ and $S^*_3$ on $\tilde{\Omega}_3$.

In Figure \ref{2D2curves1stApperror}
we show the error $e=f-S^*$ at the initial grid points. We observe that $e$ is quite smooth, which means that the piecewise spline approximation $S^*$ has captured well the singularity structure of $f$, both across the singularity curves and along the boundaries.

\begin{figure}[!ht]
\begin{center}
    \includegraphics[width=5in]{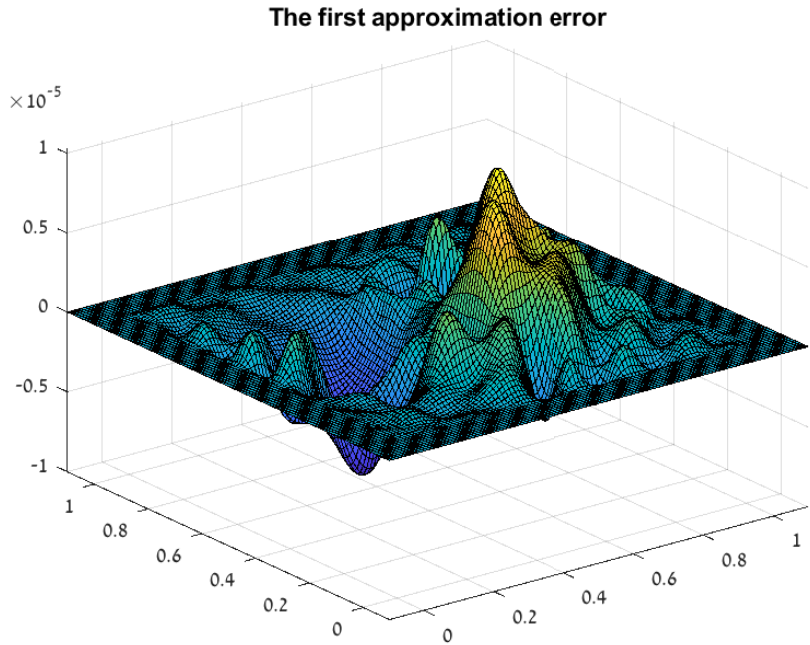}
    \caption{The error in the first stage approximation to $f$.}
    \label{2D2curves1stApperror}
\end{center}    
\end{figure}

\subsection{Second stage - corrected approximation}\label{2Dcorrected}\hfill

\medskip
Here also the error $e=f-S^*$, evaluated at the data points $\{(ih,jh)\}_{i,j=0}^N$, forms a `smooth' data sequence, and we apply smooth approximation procedure to the data $\{e(ih,jh)\}_{i,j=0}^N$ to obtain an approximation $\tilde{e}(x)$. The final corrected approximation is defined as
\begin{equation}\label{finalAppr}
\tilde{f}\equiv S^*+\tilde{e}.
\end{equation}
Computing  $\tilde{e}$ using a fifth order tensor product spline interpolation to $e$, we derive the final corrected approximation to $f$, shown in Figure \ref{Final2curves}.

\begin{figure}[!ht]
\begin{center}
    \includegraphics[width=5in]{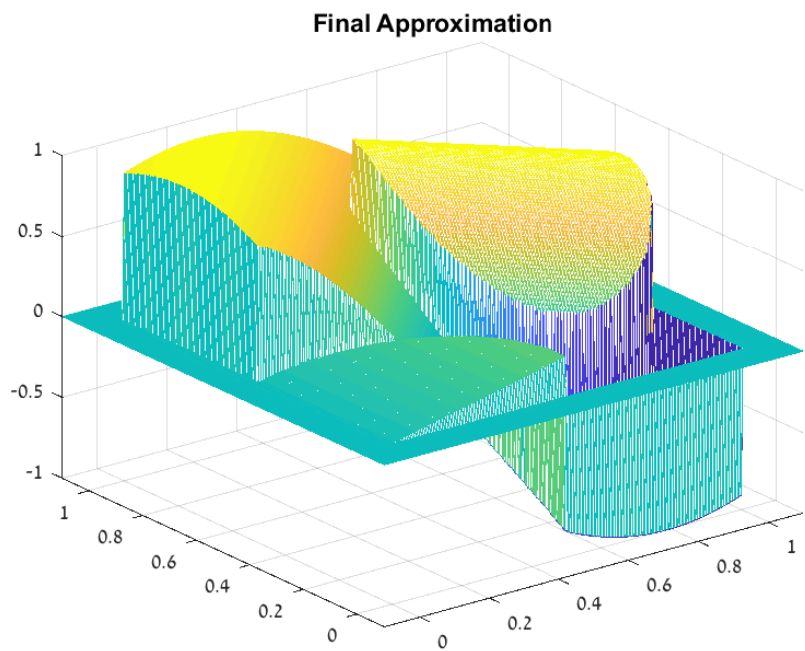}
    \caption{The final corrected approximation to $f$.}
    \label{Final2curves}
\end{center}    
\end{figure}

Figure \ref{Final2curvesApperror} depicts the values of the error in the final corrected approximation $\tilde{f}$, evaluated on a fine grid. We observe that the absolute value of the maximal error is $2\times 10^{-7}$, and it is attained  at a singularity curve. There are also errors of magnitude $\sim 10^{-8}$ near the boundaries, but everywhere else the errors are of magnitude $\sim 10^{-12}$.

\begin{figure}[!ht]
\begin{center}
    \includegraphics[width=5in]{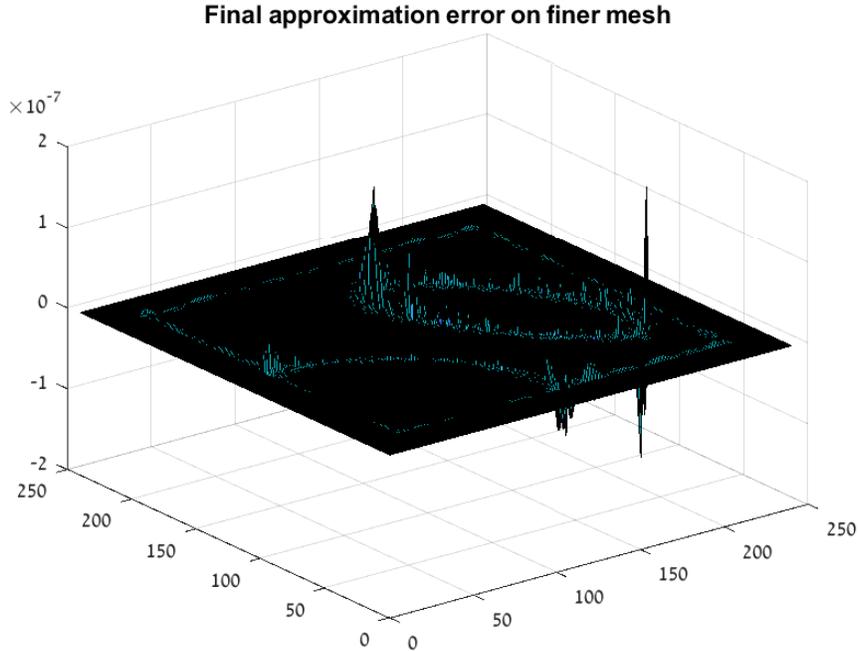}
    \caption{The final corrected approximation errors.}
    \label{Final2curvesApperror}
\end{center}    
\end{figure}

\vfill\eject
\section{A 3-D example}

Let $f$ be a piecewise smooth function on $[0,1]^3$, defined by the combination of two pieces $f_1\in C^m[\Omega_1]$ and $f_2\in C^m[\Omega_2]$, $\Omega_2=[0,1]^3\setminus \Omega_1$. We assume that we are given function values $\{f_{ijk}\equiv f(ih,jh,kh)\}_{i,j,k=0}^N$, $h=1/(N-1)$. Artificially we extend the data outside $[0,1]^3$ with zero values. We do not know the position of the dividing surface separating $\Omega_1$ and $\Omega_2$. We denote this surface by $\Theta$, and we assume that it is a $C^m$-smooth surface. As in the 1-D case, the existence of a singularity in $[0,1]^3$ significantly influences standard approximation procedures,  especially near $\Theta$, and the approximation power also deteriorates near the boundaries. As in the univariate case, the approximation procedure described below is based upon matching a signature of the function and the approximant. The computation algorithm involves finding approximations to $f_1$ and $f_2$ simultaneously, followed by a correction step.

The first step of approximating $\Theta$ is a straightforward extension of the 2-D procedure for approximating singularity curves.

Consider the piecewise smooth function $f(x,y)$ on $\Omega=[0,1]^3$ with a jump singularity across the surface $\Theta$ which is the surface of the closed 4-ball $B$ of radius $0.33$ centered at $(0.5,0.5,0.5)$,
$$B=\{(x,y,z)\ |\ (x-0.5)^4+(y-0.5)^4+(z-0.5)^4\le 0.33^4\},$$

\begin{equation}\label{testf3Dnonsmooth}
f(x,y,z)=
\begin{cases} 
(exp((x+y)/2)-1)sin(2x),& (x,y,z)\notin B,\\
sin(4(x^2+y^2+z^2))sin(2(x-y)), & (x,y,z)\in B.\\ 
\end{cases}
\end{equation}
Assume we are given function values $\{f_{ijk}=f(ih,jh,kh),\ (ih,jh,kh)\in \Omega\}$, $h=1/(N-1)$,
and artificially extend the data with zero values at $m$ layers around $\Omega$ .
A cross-section of the extended data of the test function (\ref{testf3Dnonsmooth}), for $z=0.5$, $N=41$, is shown in Figure \ref{testf3Dh025B4}.

 \begin{figure}[!ht]
 \begin{center}
    \includegraphics[width=5in]{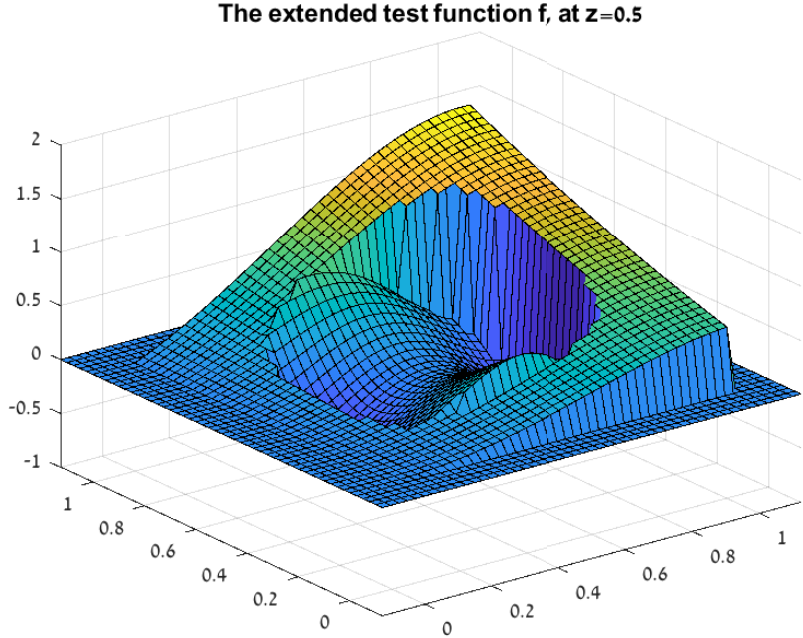}
    \caption{A cross-section of the 3D test function.}
    \label{testf3Dh025B4}
\end{center}
\end{figure}

Let us denote by $Q_0$ all the points $\{(s_{j,k},jh,kh)\}$,
$\{(ih,t_{i,k},kh)\}$, and $\{(ih,jh,r_{i,j})\}$ found as described above for the 2D case, i.e., the midpoints of intervals containing a singularity along lines parallel to the axes. We use these points to construct a tensor product cubic spline $D(x,y,z)$, whose zero level surface defines the approximation to $\Theta$. To construct $D$ we first overlay on $[0,1]^3$ a net of $m\times m$ points, $Q$. These are the red points displayed in Figure \ref{3DsurfacedistB4}, for $m=11$, together with 
the points $Q_0$, for $h=0.025$, in blue.
\begin{figure}[!ht]
\begin{center}
    \includegraphics[width=4in]{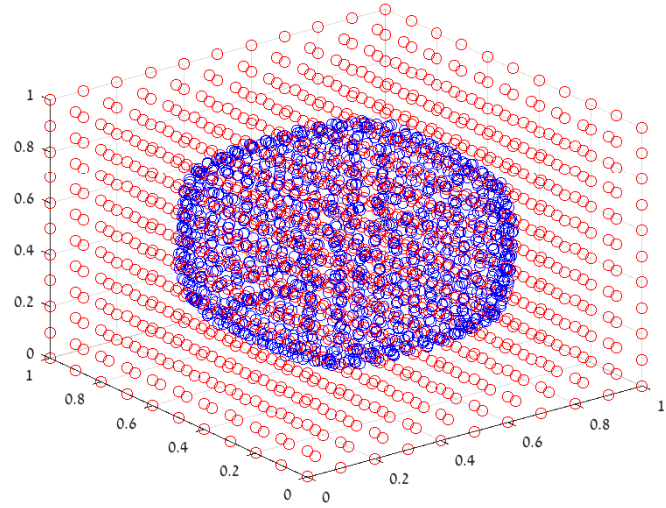}
    \caption{The points $Q$ (red) and the points $Q_0$ (blue).}
    \label{3DsurfacedistB4}
\end{center}    
\end{figure}

\begin{figure}[!ht]
\begin{center}
    \includegraphics[width=4in]{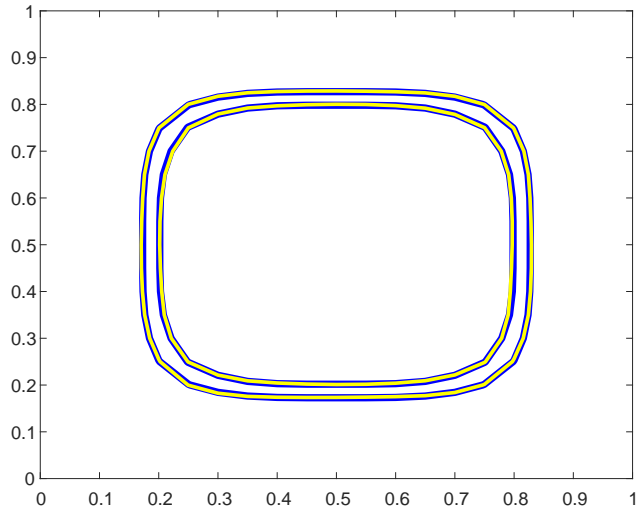}
    \caption{Cross-sections of the approximation of the singularity surface $\Theta$ (yellow), at $z=0.25$ and $z=0.5$.}
    \label{3DsurfaceappB4}
\end{center}
\end{figure}

To each point in $Q$ we assign the value of its distance from the set $Q_0$, with a plus sign for the points which are in $B$, and a minus sign for the points outside $B$. To each point in $Q_0$ we assign the value zero. The spline function $D$ is now defined by the least-squares approximation to the values at all the points $Q\cup Q_0$. We have used here tensor product cubic spline on a uniform mesh with knots' distance $d=0.2$. We denote the zero level surface of the resulting $D$ as $\Theta_0$, and cross-section curves of this surface are depicted in yellow in Figure \ref{3DsurfaceappB4}, together with the exact relevant curves of $\Theta$ in blue. 

Next, we show here some numerical results of the first approximation step and the correction step for a specific numerical example. Two new ingredients we choose to present here: Using a different signature operator, and using Tchebyshev polynomials instead of B-splines for building the first stage approximation.

Here we have used the signature of $f$ defined by fourth-order forward differences of the extended data in three directions. The first stage approximation is computed by matching the signature of $f$ and the signature of the approximant $S$ defined by two sets of tensor product Tchebyshev polynomials up to degree $n$. One for the approximation over $\Omega_1=B$, and the other for approximation over $\Omega_2=[0,1]^3\setminus B$. 
In 3-D numerical tests we are limited by the memory constraints of Matlab.
We took $h=0.025$ and tensor product polynomials of degree $8$. The error in the resulting approximation is displayed in Figure \ref{3D1stapperrorB4}.
 
  \begin{figure}[!ht]
  \begin{center}
    \includegraphics[width=5in]{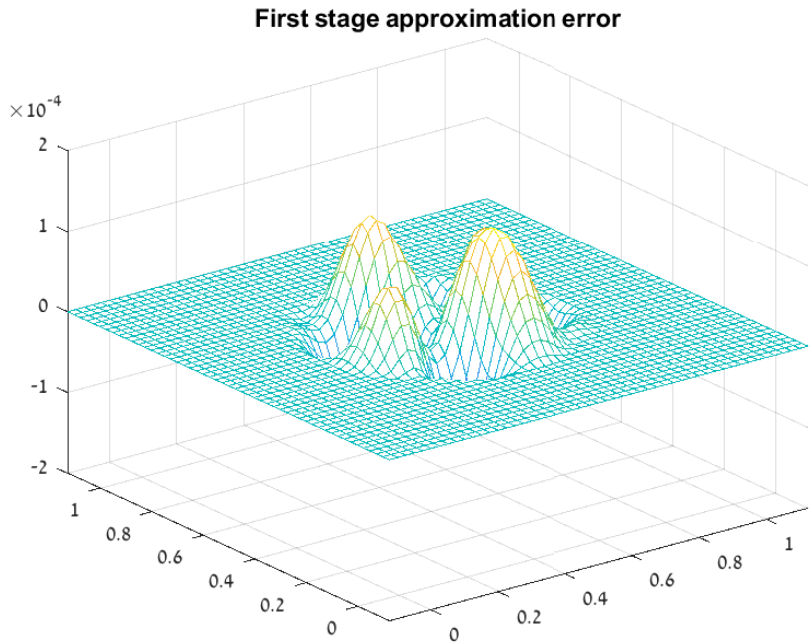}
    \caption{First stage approximation error, at $z=0.5$.}
    \label{3D1stapperrorB4}
\end{center}    
\end{figure}

\medskip

\begin{remark}{\bf 3D analysis}\hfill

\medskip
Let the signature $\sigma_m$ of the function data be defined by its $m$th order differences along the 3 axes.
Adapting the discussion in Section \ref{someanalysis} to the 3D case, it follows that for the approximant $S^*$ minimizing
$\|\sigma_m(f)-\sigma_m(S)\|_2^2$, it follows that
\begin{equation}\label{sigmabound3}
\|\sigma_m(f-S^*)\|_\infty=\|\sigma_m(f)-\sigma_m(S^*)\|_\infty\le C_1N^{\frac{3}{2}}h^m.
\end{equation}
\end{remark}

Using the above result, it follows that
the error $e=f-S^*$, evaluated at the data points $\{(ih,jh,kh)\}_{i,j,k=0}^N$, forms a `smooth' data sequence, and we can apply a smooth approximation procedure to the data $\{e(ih,jh,kh)\}_{i,j,k=0}^N$ to obtain an approximation $\tilde{e}(x)$. The final corrected approximation is defined as
\begin{equation}\label{finalAppr}
\tilde{f}\equiv S^*+\tilde{e}.
\end{equation}
Computing  $\tilde{e}$ using cubic tensor product spline interpolation to $e$, we derive the final corrected approximation to $f$, shown in Figure \ref{3D2ndfineappB4}.

\begin{figure}[!ht]
\begin{center}
    \includegraphics[width=5in]{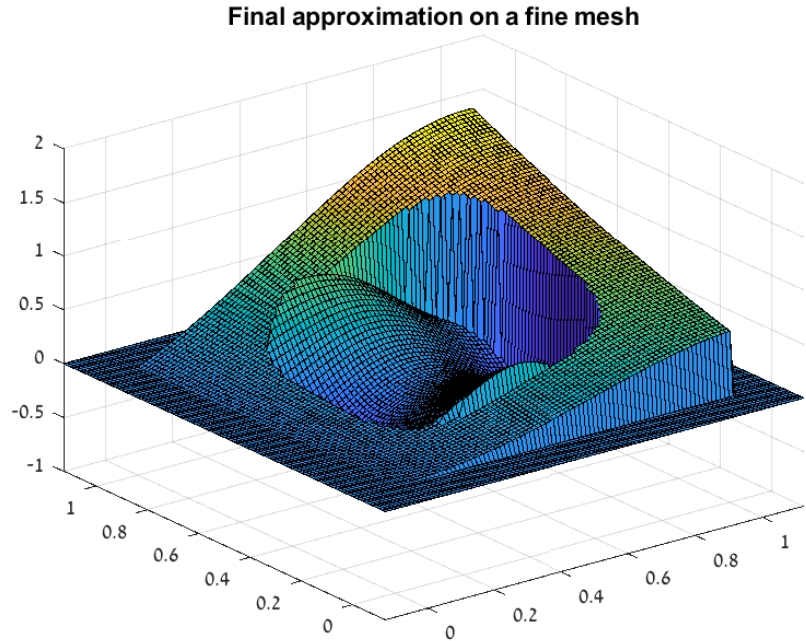}
    \caption{The final corrected approximation on a fine mesh, at $z=0.5$.}
    \label{3D2ndfineappB4}
\end{center}
\end{figure}

Figure \ref{3D2ndapperrorB4} depicts the values of the error in the final corrected approximation $\tilde{f}$, evaluated on a fine grid. We observe that the absolute value of the maximal error is $6.5\times 10^{-6}$, and it is attained  at a singularity surface. There are also errors of magnitude $\sim 5\times 10^{-8}$ near the boundaries, and everywhere else the errors are of magnitude $\sim 10^{-9}$.

\begin{figure}[!ht]
\begin{center}
    \includegraphics[width=5in]{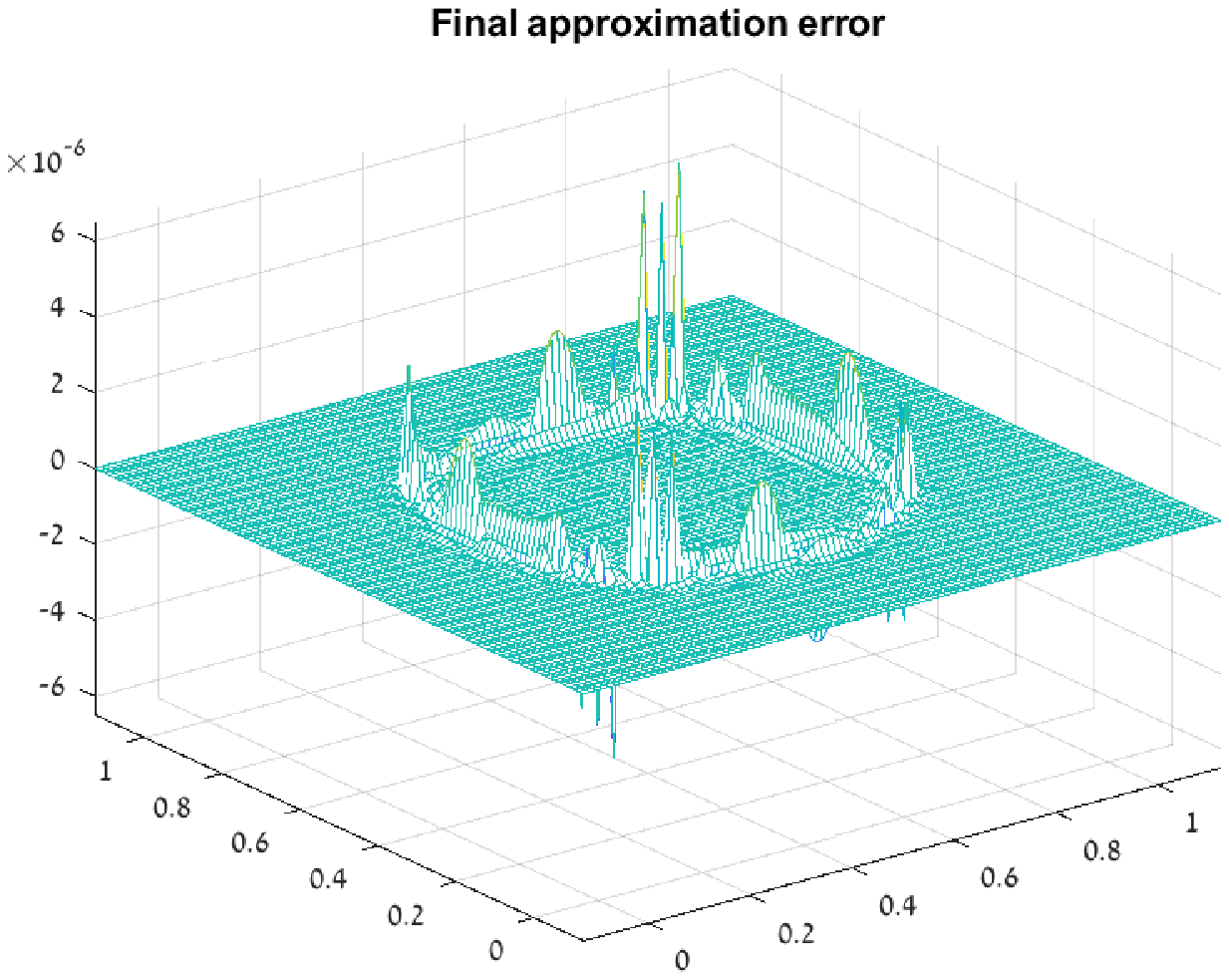}
    \caption{The final corrected approximation errors, at $z=0.5$.}
    \label{3D2ndapperrorB4}
\end{center}    
\end{figure}

\vfill\eject

\bigskip

\section*{Appendix}

\subsection*{From $k$th difference to lower order differences}\hfill

\medskip
\begin{lemma}
Let $h=\frac{1}{N+1}$,
$$ g(ih)=0, \ \ i<0,$$
and
$$\Delta^kg(ih)=O(h^\alpha),\ \ \ -k\le i \le N-k.$$
Then,
$$\Delta^{k-\ell}g(ih)=O(h^{\alpha-\ell}),\ \ \ -k\le i \le N-k+\ell.$$
\end{lemma}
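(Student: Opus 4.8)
The plan is to prove the statement by induction on $\ell$, the base case $\ell = 0$ being exactly the hypothesis. The whole argument rests on the first-order telescoping identity relating two consecutive difference orders. Writing $u_i := \Delta^{p-1}g(ih)$ for an order $p$, the definition of the forward difference gives $\Delta^p g(ih) = u_{i+1} - u_i$, and summing from a left anchor $i = -k$ yields
\begin{equation}
\Delta^{p-1}g(ih) \;=\; \Delta^{p-1}g(-kh) \;+\; \sum_{j=-k}^{i-1}\Delta^p g(jh), \qquad i \ge -k.
\end{equation}
The first thing I would establish is that the anchor term vanishes. Since $g(ih) = 0$ for $i < 0$ and $\Delta^{p-1}g(ih)$ is a fixed linear combination of $g(ih), \dots, g((i+p-1)h)$, the lower difference is identically zero whenever $i + p - 1 < 0$, i.e. for $i \le -p$. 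As every order we meet satisfies $p \le k$, this gives $\Delta^{p-1}g(-kh) = 0$, so the telescoping sum starts from zero.

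For the inductive step, suppose $\Delta^{k-\ell}g(ih) = O(h^{\alpha-\ell})$ holds on $-k \le i \le N-k+\ell$. Apply the identity above with $p = k-\ell$ to express $\Delta^{k-\ell-1}g(ih)$ as the sum $\sum_{j=-k}^{i-1}\Delta^{k-\ell}g(jh)$. Every summand is $O(h^{\alpha-\ell})$ as long as its index lies in the admissible range $-k \le j \le N-k+\ell$, which forces $i-1 \le N-k+\ell$, i.e. $i \le N-k+\ell+1$; this is precisely the enlarged range claimed at level $\ell+1$. The number of summands is $i + k \le N+\ell+1 = O(1/h)$, using $h = 1/(N+1)$ and the fact that $\ell \le k$ is a fixed parameter. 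Hence the sum has at most $O(1/h)$ terms each of size $O(h^{\alpha-\ell})$, giving $\Delta^{k-\ell-1}g(ih) = O(h^{\alpha-\ell-1})$ on $-k \le i \le N-k+\ell+1$, which closes the induction.

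I expect the only real hazard to be the index bookkeeping rather than any analytic subtlety: at each reduction one must simultaneously verify that the left anchor $i = -k$ lies in the zero region of the lower difference (so the boundary term genuinely drops), and that the upper summation index $i-1$ stays inside the range where the higher-order estimate is known --- it is exactly the latter constraint that forces the valid range to expand by one unit of $\ell$ per step. The clean loss of one power of $h$ per order is then an immediate consequence of the $O(1/h)$ term count; keeping $\ell \le k$ finite ensures the accumulated constant remains a fixed constant depending only on $k$.
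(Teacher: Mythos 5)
Your proof is correct and follows essentially the same route as the paper: telescoping the first-order difference from a left anchor in the zero-padded region and bounding a sum of $O(1/h)$ terms, each of size $O(h^{\alpha-\ell})$, to lose one power of $h$ per reduction. The only difference is one of care, not substance --- the paper writes out only the single step $\Delta^1 \to \Delta^0$ and invokes recursion, whereas you carry out the general inductive step explicitly, verifying that the anchor $\Delta^{p-1}g(-kh)$ vanishes for $p\le k$ and tracking the widening index range, which the paper leaves implicit.
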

\begin{proof}
It is enough to prove for one step, say from $\Delta^1$ to $\Delta^0$.

From $\Delta g(ih)=g((i+1)h)-g(ih)$ we have
$$g((i+1)h)=\Delta g(ih)+g(ih).$$
Assuming $g(-h)=0$ it follows that
$$g(jh)=\sum_{i=-1}^{j-1}\Delta g(ih).$$
Assuming $$\Delta g(ih)=O(h^\alpha),\ \ \ -1\le i \le N-1,$$
we obtain
$$|g(jh)|\le jCh^\alpha ,\ \ 0\le j \le N,$$
implying
$$g(jh)=O(h^{\alpha-1}).$$
Recursive application of the above result yieilds the result.
\end{proof}

\end{document}